\newtheorem{theorem}{Theorem}[section]
\newtheorem{corollary}[theorem]{Corollary}
\newtheorem{lemma}[theorem]{Lemma}
\newtheorem{proposition}[theorem]{Proposition}
\theoremstyle{definition}
\newcommand{\ep}{\varepsilon}
\title[Entropy and Ergodic Measures for Toral Automorphisms]
      {Entropy and Ergodic Measures for Toral Automorphisms}
\author[Peng Sun]{}
\subjclass{Primary: 37B40, 54F45.}
 \keywords{entropy, ergodic measure, toral automorphism.}
 \email{pengsunmath@gmail.com}
\begin{document}

\maketitle

\centerline{\scshape Peng Sun }
\medskip
{\footnotesize
 \centerline{China Economics and Management Academy}
   \centerline{Central University of Finance and Economics}
   \centerline{Beijing 100081, China}
} 

\bigskip

\begin{abstract}
    In this paper we show that for every linear toral automorphism,
    especially the non-hyperbolic ones,
    the entropies of ergodic measures form a dense set on the interval 
    from zero to the topological entropy.
\end{abstract}


\section{Introduction}

Let $f$ be a diffeomorphism on a compact manifold. $f$ has some
ergodic invariant measures. By Variational Principle, the supremum of their
metric entropies equals the topological entropy of $f$. But only few results have been obtained about the structure of the set
$$E(f)=\{h_\mu(f)| \mu\text{ is an ergodic invariant measure of }f\}.$$
Katok conjectures $E(f)\supset [0, h(f))$ if $f$ has enough regularity.
He proved that this is the case for $C^r (r>1)$ diffeomorphisms on surfaces \cite{Ka83}.
Recently we extended Katok's result to certain skew product cases \cite{Su10}.

In this paper we deal with linear
toral automorphisms, especially the non-hyperbolic ones.
Precisely, let $f_A$ be a linear toral automorphism
induced by $A\in GL(2,\mathbb{Z})$. We have

\begin{theorem}
For any linear toral automorphism $f_A$, $E(f_A)$ is dense in $[0, h(f_A)]$.
\end{theorem}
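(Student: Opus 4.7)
The plan is to split the argument according to whether $A$ is hyperbolic, and to handle the substantial case via a symbolic model.

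Since $A \in GL(2,\mathbb{Z})$ has $\det A = \pm 1$, the eigenvalues $\lambda_1,\lambda_2$ of $A$ satisfy $\lambda_1\lambda_2 = \pm 1$, so either both lie on the unit circle (non-hyperbolic) or $|\lambda_1|>1>|\lambda_2|$ (hyperbolic). In the non-hyperbolic case the iterates $A^n$ have polynomially bounded norm, hence $h(f_A)=0$; then $[0,h(f_A)]=\{0\}$ and the Dirac measure at the fixed point $0\in\mathbb{T}^2$ is ergodic with entropy $0$, so density is immediate.

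For the hyperbolic case, $f_A$ is a smooth Anosov diffeomorphism of a surface, so Katok's theorem \cite{Ka83} already yields $E(f_A)\supset [0,h(f_A))$. I would, however, prefer a more constructive route that is better suited to generalization: use the Markov partition of $f_A$ to obtain an entropy-preserving, finite-to-one semiconjugacy $\pi:(\Sigma_B,\sigma)\to(\mathbb{T}^2,f_A)$ from a mixing subshift of finite type. Ergodic $\sigma$-invariant measures push forward to ergodic $f_A$-invariant measures of the same entropy (points with multiple preimages form a thin set carrying no ergodic measure of positive entropy), so it suffices to prove density of $E(\sigma)$ in $[0,h_{\mathrm{top}}(\sigma)]$. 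For each target value $s\in(0,h_{\mathrm{top}}(\sigma))$, I would construct a mixing sub-SFT $\Sigma'\subset\Sigma_B$ with topological entropy close to $s$, e.g.\ by choosing a carefully selected family $W$ of admissible $n$-words with $\tfrac{1}{n}\log|W|\approx s$ and taking the SFT they generate, and then put the Parry measure on $\Sigma'$; this measure is ergodic with entropy exactly $h_{\mathrm{top}}(\Sigma')$.

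The main obstacle is this last construction: one must pick $W$ so that the induced sub-SFT is strongly connected (so that the Parry measure is ergodic) while simultaneously controlling its topological entropy to hit any prescribed value densely in $[0,h_{\mathrm{top}}(\sigma)]$. Exponential growth of admissible $n$-words in a mixing parent SFT provides enough slack to trim $W$ to arbitrary near-integer sizes without destroying strong connectivity, and the topological entropy of the resulting sub-SFT varies continuously enough in the combinatorial data of $W$ to sweep a dense set of values; verifying these two properties together is the technical heart of the argument.
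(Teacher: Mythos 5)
Your proposal rests on a misreading of the setting. Although the introduction contains the misprint $A\in GL(2,\mathbb{Z})$, the entire body of the paper works on $X=\mathbb{T}^l$ for arbitrary $l$: Section~2 introduces eigenvalues $\chi_1>\cdots>\chi_u>1$ of arbitrary multiplicities $\zeta_i$, a possibly high-dimensional unstable subspace $\mathbb{R}^\zeta$, etc. On $\mathbb{T}^2$ your dichotomy is correct but the theorem is essentially vacuous: a non-hyperbolic element of $GL(2,\mathbb{Z})$ has both eigenvalues on the unit circle, hence is quasi-unipotent with $h(f_A)=0$, while the hyperbolic case is already covered by Katok. The content of the paper lives in higher dimension, where an irreducible $A$ can have some eigenvalues on the unit circle and others off it (e.g.\ companion matrices of Salem numbers), giving a \emph{non-hyperbolic} automorphism with $h(f_A)>0$. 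That is the case the theorem is about.

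And precisely in that case your main tool is unavailable: the paper cites Lindenstrauss--Schmidt \cite{LK05} for the fact that non-hyperbolic toral automorphisms admit \emph{no} Markov partition, and explicitly remarks that ``the way of constructing ergodic measures from symbolic spaces, as for hyperbolic automorphisms, does not work for non-hyperbolic ones.'' So the plan of semiconjugating to a mixing SFT and trimming sub-SFTs to sweep intermediate entropies cannot be carried out in the only situations where the statement is not already classical. (It would also not be enough to restrict to the expanding directions: the presence of a central, isometric direction is exactly what obstructs the Markov structure.) The paper instead works directly on the torus: it builds, along the unstable foliation, a family of closed invariant sets $U_k(\bar r,\ep)$ (intersections of forward and backward iterates of a ``slab'' $K(\bar r)$), bounds $h(f^k,U_k(\bar r,\ep))$ between two explicit combinatorial quantities $g_\pm(k,\bar r,\ep)$ using a counting argument of Ma\~n\'e on unstable leaves and the Ledrappier--Young entropy formula for the upper bound, and then shows that as $k\to\infty$, $\ep\to 0$, these bounds squeeze together and can be tuned, via $\bar r$, to hit any interval $[\beta_1,\beta_2]\subset[0,h(f_A)]$. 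Upper semicontinuity of metric entropy then produces an ergodic measure of maximal entropy on each such $U_k(\bar r,\ep)$, giving density of $E(f_A)$. To repair your argument you would need to replace the SFT construction with something that works without a symbolic model; as written, the technical heart of your proposal cannot be completed.
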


In fact, we believe that $E(f)\supset[0, h(f))$. However,
we are not able to carry through it at the moment. The critical obstacle
is the discontinuity of entropy map on invariant measures.

To prove the theorem, we only need to show the following proposition. 
When $f_A$ is reducible, it can be
 decomposed into a
direct product of irreducible automorphisms.

\begin{proposition}\label{promain}
Let $f_A$ be an irreducible linear automorphism of the torus. For every
$\beta_1,\beta_2\in[0, h(f)]$ and $\beta_1<\beta_2$, there is a compact invariant
set $M_{\beta_1,\beta_2}$ such that $\beta_1\le h(f_A|_{M_{\beta_1,\beta_2}})\le\beta_2$.
\end{proposition}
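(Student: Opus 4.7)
The plan is to construct $M_{\beta_1,\beta_2}$ as a push-forward of a carefully chosen subshift of a symbolic coding of $f_A$, after disposing of degenerate cases. First I would deal with $h(f_A)=0$: since $A$ is an irreducible integer matrix of determinant $\pm 1$, if every eigenvalue has modulus one then by Kronecker's theorem on algebraic integers the eigenvalues are all roots of unity, so $A$ has finite order and any periodic orbit of $f_A$ serves as $M_{\beta_1,\beta_2}$. Hence assume $h(f_A)>0$, i.e.\ $A$ has at least one eigenvalue of modulus greater than one.

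In the hyperbolic case the strategy is standard. A Markov partition (Adler--Weiss, Sinai, Bowen) provides a finite-to-one factor map $\pi:\Sigma\to\mathbb{T}^n$ from a topologically mixing subshift of finite type $\Sigma$ with $h(\sigma|_\Sigma)=h(f_A)$. Inside any such mixing SFT one can produce subshifts of finite type with topological entropies dense in $[0,h(\sigma|_\Sigma)]$---for example by forbidding an appropriate finite family of words to tune the growth rate of admissible configurations. I would select such a $\Sigma'\subset\Sigma$ with $h(\sigma|_{\Sigma'})\in[\beta_1,\beta_2]$ and set $M_{\beta_1,\beta_2}=\pi(\Sigma')$. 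This set is compact and $f_A$-invariant, and since $\pi$ is finite-to-one one gets $h(f_A|_{M_{\beta_1,\beta_2}})=h(\sigma|_{\Sigma'})\in[\beta_1,\beta_2]$, as required.

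The substantive difficulty, and the main obstacle I expect to encounter, is the partially hyperbolic case in which $A$ has eigenvalues both on and off the unit circle. There is no classical Markov partition available, and the center direction provides a zero-entropy almost-isometric factor that obstructs the standard coding; moreover, irreducibility prevents us from separating out the center by quotienting onto a proper invariant subtorus, so the center and hyperbolic directions are algebraically entangled. To get around this I would attempt one of two routes. The first is to build rectangles adapted to the unstable direction and track orbits transversely to the center, producing a semi-conjugacy of full entropy from a mixing SFT, with the center contributing nothing to entropy because of its equicontinuous character. The second is to exploit a specification-type property for irreducible toral automorphisms: use shadowing to realize each combinatorial pattern in a chosen $\Sigma'$ by an actual orbit in $\mathbb{T}^n$ and then take the closure, controlling entropy from above by the subshift entropy and from below via a separated-set count coming from the hyperbolic direction. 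Once a symbolic model of full entropy is in place, the combinatorial subshift construction and push-forward argument of the hyperbolic case carry over essentially unchanged.
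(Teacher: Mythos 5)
There is a genuine gap in the non‑hyperbolic case, which is precisely the content the proposition is meant to cover. For the hyperbolic case your reduction to a mixing SFT via a Markov partition and a finite‑to‑one factor map is fine, and the zero‑entropy observation via Kronecker's theorem is correct, but both of these are classical and not where the difficulty lies. In the quasi‑hyperbolic case (eigenvalues both on and off the unit circle) your first route—producing a semi‑conjugacy of full entropy from a mixing SFT by building rectangles adapted to the unstable direction—is exactly what is ruled out by Lindenstrauss and Schmidt \cite{LK05}: irreducible non‑hyperbolic toral automorphisms admit no Markov partition, and the symbolic representations they do admit are not finite‑to‑one and do not let you simply push forward a sub‑SFT while keeping control of entropy from above. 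Your second route—specification plus shadowing—names the right keywords but is not a proof: specification helps you produce orbits and measures, not compact invariant sets with a two‑sided entropy bound. In particular you give no mechanism for the upper bound $h(f_A|_M)\le\beta_2$; taking a closure of shadowed orbits can introduce extra entropy, and nothing in the sketch controls it. The phrase ``the center contributing nothing to entropy because of its equicontinuous character'' is also not a usable ingredient here, since the center direction is not an invariant factor (irreducibility entangles it with the unstable direction, as you yourself note).

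The paper's actual argument is entirely geometric and avoids symbolic dynamics. Following Ma\~n\'e \cite{Ma78}, it builds a family of compact sets $K(\bar r)$ from a linear change of coordinates adapted to the unstable subspace, and then forms the invariant sets $U_k(\bar r)=\bigcap_{i\in\mathbb Z}f^{ik}(K(\bar r))$. Counting how many ``cells'' of $K(\bar r)$ a dilated disk in the unstable leaf meets gives sharp two‑sided estimates $\log g_-\le h(f^k,U_k(\bar r))\le\log g_+$ (Propositions \ref{entest} and \ref{uentes}); the upper bound uses the Ledrappier–Young formula through a partition subordinate to the unstable foliation, which is exactly the tool that replaces a Markov partition. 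Lemma \ref{erres} shows $\log g_+-\log g_-$ can be made small relative to $k$, and Proposition \ref{fineest} shows the attainable values of $\frac1k\log g_-$ sweep out $[0,h(f)]$; taking $M=\bigcup_{j=1}^k f^j(U_k(\bar r,\ep))$ then yields the proposition. If you want to pursue a symbolic route you would need something substantively new in place of the Markov coding, not an assertion that the standard machinery ``carries over essentially unchanged.''
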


As $f_A$ is $C^\infty$, the map $\mu\to h_\mu(f_A)$ is upper-semicontinuous
on the set of all $f_A$-invariant measures \cite{New89}.
So every such $M_{\beta_1,\beta_2}$ supports a measure of
maximal entropy. We have that $E(f_A)\supset\{h(f_A|_{M_{\beta_1,\beta_2}})\}$
and hence $E(f_A)$ is dense in $[0, h(f_A)]$.

As we know, when $A$ is a hyperbolic
matrix, i.e.
none of the eigenvalues lies on the unit circle, $f_A$ is Anosov (uniformly
hyperbolic) and $E(f_A)=[0, h(f_A)]$. The result of this paper makes sense
when $f_A$ is non-hyperbolic.
Irreducible non-hyperbolic toral automorphisms have drawn some interests
and similar
problems have been considered. The basic geometric structure and dynamical
properties of these maps has been discussed in \cite{Lind82}. In \cite{LK04}
and \cite{LK05} some properties that holds for all ergodic measures are shown
using harmonic analysis and algebraic methods. For the entropy of ergodic
measures, the case that $A$ is a companion matrix of a Salem number has been
discussed, where the stable foliation is one-dimensional and there is 
no repeated complex eigenvalues. Another notable
result from \cite{LK05} is that non-hyperbolic automorphisms
do not have Markov partitions. So the way of constructing ergodic measures
from symbolic spaces, as for hyperbolic automorphisms, does not work
for non-hyperbolic ones. In this paper, we use a geometrical argument
due to Ma\~n\'e that works mainly with unstable leaves to achieve our
result.






\section{Basic Facts}
Let $X=\mathbb{T}^l$. We fix $f=f_A:X\to X$ and assume that $A$ is irreducible.
Denote $[y]=y+\mathbb{Z}^l$, the image under the covering map $\mathbb{R}^l\to\mathbb{T}^l$.
Let $\chi_1>\chi_2>\cdots>\chi_u>1$ be the absolute values of the eigenvalues
of $A$ larger than $1$, each with multiplicity $\zeta_i$. Denote $\zeta=\sum_{1\le
i\le u}\zeta_i$ and $\zeta_{u+1}=l-\zeta$.
Let $W_i$ be the $i$-th unstable foliation corresponding to $\chi_i$
and $W$ be the whole unstable foliation. 

For every $\ep>0$, there is $Q_\ep\in GL(l, \mathbb{R})$ such that
$A_\ep=Q_\ep^{-1} AQ_\ep$ is diagonal, or every Jordan block has the
 form like
$\begin{pmatrix}\chi & \ep & \\
 & \chi & \ep\\
 & & \ddots & \ddots\\
 & & & \chi & \ep\\
 & & & & \chi
\end{pmatrix}$, where $\chi$ denotes a real eigenvalue, or a $2\times 2$ matrix
corresponding to a complex eigenvalue, in which case the $\ep$'s denote matrices
whose norms are less than $\ep$. 

Let $R=\mathbb{R}^\zeta\times\{0\}\subset\mathbb{R}^l$.
Define $$\Phi_x=\Phi_{x,\ep}: R\to W(x),
\Phi(y)=[x+Q_\ep\cdot y].$$ $\Phi_x$ is a diffeomorphism.


For $r_i\ge 0, i=1,2,\dots, u$, let $\bar r=(r_1, r_2, \dots, r_u)$, and
\begin{align*}
D(\bar r)=\{&(x_1, x_2, \dots, x_u,0)\in R|
x_i\in\mathbb{R}^{\zeta_i}, \|x_i\|\le r_i, i=1, 2, \dots, u\}
\end{align*}

\begin{lemma}
If $\ep<\chi_j, j=1,2,\dots,u$, then
$$A_\ep(D(r_{1},\dots, r_{u}))\supset D
((\chi_{1}-\ep)r_{1}, \dots, (\chi_{u}-\ep)r_u),$$
$$A_\ep(D(r_{1},\dots, r_{u}))\subset D
((\chi_{1}+\ep)r_{1}, \dots, (\chi_{u}+\ep)r_{u}).$$
\end{lemma}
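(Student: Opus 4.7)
The plan is to exploit the block-diagonal form of $A_\varepsilon$ described immediately above the lemma. Each Jordan-type block of $A_\varepsilon$ preserves its corresponding eigenspace $\mathbb{R}^{\zeta_i}\subset R$, so the restriction of $A_\varepsilon$ to $R$ splits as a direct sum $A_\varepsilon^1\oplus\cdots\oplus A_\varepsilon^u$, and $D(\bar r)$ is the product of closed balls $B_{r_i}\subset\mathbb{R}^{\zeta_i}$. Hence both inclusions reduce to showing, for each $i$, that $A_\varepsilon^i(B_{r_i})\supset B_{(\chi_i-\varepsilon)r_i}$ and $A_\varepsilon^i(B_{r_i})\subset B_{(\chi_i+\varepsilon)r_i}$.

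The key step is the two-sided norm estimate
\[
(\chi_i-\varepsilon)\|y\|\le\|A_\varepsilon^i y\|\le(\chi_i+\varepsilon)\|y\|,\qquad y\in\mathbb{R}^{\zeta_i}.
\]
To prove it I would decompose $A_\varepsilon^i=\Lambda_i+N_i$, where $\Lambda_i$ is the ``diagonal part'' of the Jordan form ($\pm\chi_i$ times the identity in the real case, or a block-diagonal matrix whose $2\times 2$ diagonal blocks are $\chi_i$ times a rotation in the complex case) and $N_i$ is the strictly super-(block-)diagonal remainder whose nonzero entries are the $\varepsilon$'s. Working with the Euclidean norm, $\Lambda_i$ is conformal with $\|\Lambda_i y\|=\chi_i\|y\|$, and the stated form of the Jordan blocks gives $\|N_i\|\le\varepsilon$. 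The triangle inequality then yields both bounds at once: $\|A_\varepsilon^i y\|\le\|\Lambda_i y\|+\|N_i y\|\le(\chi_i+\varepsilon)\|y\|$ and $\|A_\varepsilon^i y\|\ge\|\Lambda_i y\|-\|N_i y\|\ge(\chi_i-\varepsilon)\|y\|$.

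With this estimate the upper inclusion is immediate. For the lower one I would argue by preimages: the hypothesis $\varepsilon<\chi_i$ forces $(\chi_i-\varepsilon)>0$, so each $A_\varepsilon^i$ is invertible. Given $y=(y_1,\dots,y_u,0)\in D((\chi_1-\varepsilon)r_1,\dots,(\chi_u-\varepsilon)r_u)$, solve $A_\varepsilon^i z_i=y_i$ and apply the lower norm bound to obtain $\|z_i\|\le\|y_i\|/(\chi_i-\varepsilon)\le r_i$, so $z=(z_1,\dots,z_u,0)\in D(\bar r)$ and $A_\varepsilon z=y$.

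The main (and only) subtlety I anticipate is justifying $\|N_i\|\le\varepsilon$ uniformly across the two cases, since in the complex eigenvalue block $N_i$ is itself a matrix built from $2\times 2$ super-block entries each of norm $\le\varepsilon$ rather than scalar $\varepsilon$'s. This reduces to a standard operator norm comparison for block matrices and is a routine check once the correct Euclidean norm on $\mathbb{R}^{\zeta_i}$ is chosen; no essential difficulty beyond bookkeeping should arise.
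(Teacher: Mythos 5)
The paper states this lemma without proof, so there is no in-text argument to compare against; your proposal supplies the natural one. It is correct: the near-Jordan form of $A_\ep$ splits $R=\bigoplus_i\mathbb{R}^{\zeta_i}$ into $A_\ep$-invariant blocks, $D(\bar r)$ is the corresponding product of balls, and the two-sided estimate $(\chi_i-\ep)\|y\|\le\|A_\ep^i y\|\le(\chi_i+\ep)\|y\|$ (triangle inequality applied to $A_\ep^i=\Lambda_i+N_i$ with $\|\Lambda_i y\|=\chi_i\|y\|$ and $\|N_i\|\le\ep$) gives the upper inclusion directly and the lower inclusion via preimages using invertibility, which is guaranteed by $\ep<\chi_i$. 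The block-shift norm bound $\|N_i\|\le\ep$ you flag as a subtlety indeed holds in both the scalar and the $2\times2$-block cases by the usual Cauchy--Schwarz/component computation. One small point worth making explicit: when $\chi_i$ collects several eigenvalues of the same modulus (distinct real or complex conjugate pairs), $\Lambda_i$ is block-diagonal with blocks that are $\chi_i$ times rotations or $\pm\chi_i$; it still satisfies $\|\Lambda_i y\|=\chi_i\|y\|$, i.e.\ $\Lambda_i/\chi_i$ is orthogonal, so the estimate survives this grouping.
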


\begin{corollary}\label{excor}
If $\ep<\chi_j, j=1,2,\dots,u$, then for every $k$,
$$f^k(\Phi_x(D(r_{1},\dots, r_{u})))\supset \Phi_{f^k(x)}(D
((\chi_{1}-\ep)^kr_{1}, \dots, (\chi_{u}-\ep)^kr_{u})),$$
$$f^k(\Phi_x(D(r_{1},\dots, r_{u})))\subset \Phi_{f^k(x)}(D
((\chi_{1}+\ep)^kr_{1}, \dots, (\chi_{u}+\ep)^kr_{u})).$$

\end{corollary}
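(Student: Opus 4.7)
The plan is to derive the corollary by iterating the preceding lemma $k$ times and transporting the resulting containments on $R$ to the unstable leaf $W(x)$ through the diffeomorphism $\Phi_x$. The key structural fact is that $\Phi_x$ intertwines $A_\ep$ on $R$ with $f$ on leaves.

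First I would establish the conjugacy $f \circ \Phi_x = \Phi_{f(x)} \circ A_\ep$. Since $\Phi_x(y) = [x + Q_\ep y]$ and $A Q_\ep = Q_\ep A_\ep$ by the definition of $A_\ep$, a one-line computation gives
\[
f(\Phi_x(y)) = [A x + A Q_\ep y] = [A x + Q_\ep A_\ep y] = \Phi_{f(x)}(A_\ep y).
\]
Iterating in $k$ yields $f^k \circ \Phi_x = \Phi_{f^k(x)} \circ A_\ep^k$, so
\[
f^k(\Phi_x(D(\bar r))) = \Phi_{f^k(x)}(A_\ep^k(D(\bar r))).
\]
It therefore suffices to show the two containments
\[
D((\chi_1-\ep)^k r_1,\dots,(\chi_u-\ep)^k r_u) \subset A_\ep^k(D(\bar r)) \subset D((\chi_1+\ep)^k r_1,\dots,(\chi_u+\ep)^k r_u),
\]
since applying the diffeomorphism $\Phi_{f^k(x)}$ to all three sets preserves inclusions.

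Next I would prove these containments by induction on $k$. The base case $k=1$ is exactly the lemma. For the inductive step, suppose both hold at level $k$. For the upper bound, applying $A_\ep$ to the inclusion $A_\ep^k(D(\bar r)) \subset D((\chi_i+\ep)^k r_i)$ and then invoking the lemma with the new radii $r_i' = (\chi_i+\ep)^k r_i$ gives
\[
A_\ep^{k+1}(D(\bar r)) \subset A_\ep(D((\chi_i+\ep)^k r_i)) \subset D((\chi_i+\ep)^{k+1} r_i).
\]
The lower bound is symmetric: applying $A_\ep$ to $A_\ep^k(D(\bar r)) \supset D((\chi_i-\ep)^k r_i)$ and using the lemma on the right yields $A_\ep^{k+1}(D(\bar r)) \supset D((\chi_i-\ep)^{k+1} r_i)$.

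No step here is a genuine obstacle; the whole argument is a mechanical iteration of the lemma. The one point worth mentioning is monotonicity of the family $\{D(\bar r)\}$ in each coordinate $r_i$, which is what lets the set-theoretic inclusion propagate under repeated application of $A_\ep$; this is immediate from the definition of $D(\bar r)$, since enlarging $r_i$ only enlarges the ball of admissible $x_i$.
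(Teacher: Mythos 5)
Your proof is correct and is essentially the argument the paper leaves implicit: the corollary follows from the lemma by the conjugacy $f\circ\Phi_x=\Phi_{f(x)}\circ A_\ep$ (a direct consequence of $AQ_\ep=Q_\ep A_\ep$ and the definition of $\Phi_x$) together with an induction on $k$ using monotonicity of $\bar r\mapsto D(\bar r)$. The paper states the corollary without proof, and this is the expected route.
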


Let $U=U_{\bar r, \ep}=\pi_\zeta(Q_\ep(D(\bar
r)))\subset(-\frac12,\frac12)^\zeta$,
where $\pi_\zeta:\mathbb{R}^l\to\mathbb{R}^\zeta$ is the projection to the
first $\zeta$ coordinates.
Consider
$$K(\bar r)=K(\bar r,\ep)=\{[y]| y\in\mathbb{R}^l, \pi_\zeta(y)\in U\}.$$
Unless $\ep$ is specified and has to be taken into consideration, it is usually
omitted for convenience.
$K(\bar r)$ is compact. For every $x$,
$\Phi_x^{-1}(K(\bar r)\cap W(x))$ is the union
of infinitely many copies of $D(\bar r)$ centered at points
in a lattice $\Gamma_x=\Gamma_{x,\ep}\subset R$, obtained
by translation.
We call these copies ``cells''.
$R\cong\mathbb{R}^\zeta$ is tiled by copies of the fundamental domains
centered at points in $\Gamma_x$. We call these fundamental
domains ``blocks''. Every block contains exactly one cell.
Let $\alpha=\alpha(\ep)=\mathrm{diam}(R/\Gamma_x)$ 
and $C_1=C_1(\ep)=\mathrm{vol}(R/\Gamma_x)$ be the diameter
and volume of a block. They are independent of $x$.



\begin{lemma}\label{cellcover}
Let $C_2=\mathrm{vol}(D(1))$ and $C_0=C_0(\ep)=C_2\cdot C_1^{-1}$.
If $r_j>\alpha, j=1,\dots, u$, then for every $x$,
the number of cells in $\Phi_x^{-1}(K(\bar r)\cap W(x))$ covered by
$D(\bar r)$ is at least
$$g_-(\bar r)=C_0\cdot\prod_{j=1}^u (r_j-\alpha)^{\zeta_j}.$$
The number of $J$-cells that intersect it is at most
$$g_+(\bar r)=C_0\cdot\prod_{j=1}^u (r_j+\alpha)^{\zeta_j}+1.$$
\end{lemma}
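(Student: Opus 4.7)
The plan is to reduce both bounds to standard lattice-point counting for $\Gamma_x$ in $D(\bar r)$, exploiting the fact that blocks tile $R$ with volume $C_1$ each and diameter controlled by $\alpha$.

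For the lower bound, I would identify those lattice points $g \in \Gamma_x$ whose associated block $B_g$ (the fundamental domain for $\Gamma_x$ at $g$) lies entirely inside $D(\bar r)$. Since every block contains exactly one cell, and the cell at $g$ sits inside $B_g$, any such cell is then covered by $D(\bar r)$. Because $B_g$ has diameter at most $\alpha$, a sufficient condition for $B_g \subset D(\bar r)$ is $g \in D(r_1 - \alpha, \dots, r_u - \alpha)$: indeed, any point of $B_g$ lies within Euclidean distance $\alpha$ of $g$, so each factor coordinate $\|x_i\|$ grows by at most $\alpha$, keeping the point in $D(\bar r)$. To count lattice points in $D(r_1 - \alpha, \dots, r_u - \alpha)$, I would use that the blocks meeting this shrunken region cover it, so their combined volume is at least $\mathrm{vol}(D(r_1 - \alpha, \dots, r_u - \alpha)) = C_2 \prod_j (r_j - \alpha)^{\zeta_j}$; dividing by the per-block volume $C_1$ yields at least $C_0 \prod_j (r_j - \alpha)^{\zeta_j} = g_-(\bar r)$ cells.

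For the upper bound, I would argue dually: any cell intersecting $D(\bar r)$ is contained in its surrounding block, so that block itself must intersect $D(\bar r)$. Such a block has its center $g$ within distance $\alpha$ of $D(\bar r)$, which places $g$ inside the enlarged region $D(r_1 + \alpha, \dots, r_u + \alpha)$. Since the blocks are disjoint and each has volume $C_1$, their count is at most $\mathrm{vol}(D(r_1 + \alpha, \dots, r_u + \alpha)) / C_1 = C_0 \prod_j (r_j + \alpha)^{\zeta_j}$, plus a $+1$ correction to absorb the partial block at the boundary, giving $g_+(\bar r)$.

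The main obstacle is purely the bookkeeping of the $\alpha$-corrections: one has to be careful that the Minkowski shrink/expand of $D(\bar r)$ by an $\alpha$-ball is itself contained in $D(r_1 \mp \alpha, \dots, r_u \mp \alpha)$, which uses the product-of-balls structure of $D(\bar r)$ and the inequality $\|v_i\| \le \|v\|$ for each factor. No dynamics enters here; the entire argument is a routine volume comparison for a fundamental domain of the lattice $\Gamma_x$ in $R$.
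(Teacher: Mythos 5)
Your proof is correct and follows essentially the same approach as the paper: use the fact that $\alpha$ bounds the diameter of a block to convert the inclusion (or intersection) condition for $D(\bar r)$ into an inclusion condition for the shrunken/expanded disk $D(r_1 \mp \alpha, \dots, r_u \mp \alpha)$, then compare volumes against the per-block volume $C_1$. The one presentational wobble is in the lower bound: you announce the sufficient condition ``center $g \in D(r_1-\alpha,\dots)$'' but then count blocks \emph{meeting} $D(r_1-\alpha,\dots)$, which is a slightly different set; fortunately both conditions imply $B_g \subset D(\bar r)$ via the same diameter bound, so the estimate goes through, and the paper's own phrasing (block intersects the shrunken disk) is the cleaner of the two.
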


\begin{proof}
As $\alpha$ is the diameter of a block, then if a block intersects
 $D(r_{1}-\alpha, \dots, r_{u}-\alpha)$, then it is fully covered
by $D(\bar r)$.
The number of blocks intersecting $D(\bar r)$ is at least the
volume of itself divided
by the volume of a block. Since every block contains one cell,
the number
of $J$-cells covered by $D(\bar r)$ is at least the number of blocks
covered by it, which is at least $g_-(\bar r)$.

Proof of the other statement is analogous.
\end{proof}



\section{Constructions along Unstable Foliation}


We define
$$V_k^m(\bar r)=\bigcap_{i=0}^{m-1}f^{-ik}(K(\bar r)),
V_k(\bar r)=V_k^\infty(\bar
r),
U_k(\bar r)=U_k(\bar r, \ep)=\bigcap_{i=-\infty}^\infty f^{ik}(K(\bar r,\ep)).$$

For $x\in K(\bar r)$, Let $K(\bar r, x)$ be the connected component of
$K(\bar r)\cap W(x)$ containing $x$.
$K(\bar r,x)=\Phi_{z_x}(D(\bar r))$ for some $z_x\in W(x)$. Let
$$V_k^m(\bar r,x)=V_k^m(\bar r)\cap K(\bar r,x),\;
V_k(\bar r,x)=V_k(\bar r)\cap K(\bar r,x), $$


\begin{proposition}\label{entest}
Let $k$ and $\bar r$ be such that $r_j(\chi_j-\ep)^k>\alpha, j=1,\dots, u$.
Let
\begin{align*}
g_-(k,\bar r, \ep)=&C_0\cdot\prod_{j=1}^u (r_j(\chi_j-\ep)^k-\alpha)^{\zeta_j}.\\
g_+(k,\bar r, \ep)=&C_0\cdot\prod_{j=1}^u (r_j(\chi_j+\ep)^k+\alpha)^{\zeta_j}+1.
\end{align*}
Then for every $x\in K(\bar r)$, $V_k(\bar r,x)$ is a Cantor set and
$$\log g_-(k,\bar r,\ep)\le h(f^{k}, {V_k(\bar r,x)})
\le\log g_+(k,\bar r, \ep).$$
\end{proposition}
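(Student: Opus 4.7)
The strategy is to build, by induction on $m$, a nested ``tree'' of sub-cells inside $K(\bar r,x)$: at level $m$ the set $V_k^m(\bar r,x)$ is a disjoint union of ``sub-cells'' whose cardinality lies between $g_-(k,\bar r,\ep)^{m-1}$ and $g_+(k,\bar r,\ep)^{m-1}$. Once this branching is set up, the Cantor-set property and both entropy bounds follow from standard counting of separated and spanning sets for $f^k$.

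For the inductive step, I would apply Corollary \ref{excor} to $K(\bar r,x)=\Phi_{z_x}(D(\bar r))$: the image $f^k(K(\bar r,x))$ contains $\Phi_{f^k(z_x)}(D((\chi_j-\ep)^k r_j))$ and is contained in $\Phi_{f^k(z_x)}(D((\chi_j+\ep)^k r_j))$. Lemma \ref{cellcover}, applied with the enlarged radii $r_j(\chi_j\pm\ep)^k$, shows that the inner disk fully covers at least $g_-(k,\bar r,\ep)$ cells of $K(\bar r)\cap W(f^k(x))$ while the outer disk meets at most $g_+(k,\bar r,\ep)$ cells. Pulling each such cell back by $f^{-k}$ produces a disjoint sub-cell of $K(\bar r,x)$ inside $V_k^2(\bar r,x)$; iterating yields the claimed count for $V_k^m(\bar r,x)$, and each level-$m$ component is the $(m-1)$-fold $f^{-k}$-pullback of some cell of $K(\bar r)$.

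For the Cantor claim, inverting the inclusions in Corollary \ref{excor} (using invertibility of $f$) shows that the $(m-1)$-fold $f^{-k}$-pullback of a cell has diameter at most $L\cdot\max_j(\chi_j-\ep)^{-(m-1)k}$, where $L$ is the uniform diameter of a single cell. Hence every level-$m$ component shrinks exponentially, so $V_k(\bar r,x)=\bigcap_m V_k^m(\bar r,x)$ is totally disconnected; it is perfect because each level-$m$ component contains at least $g_-\ge 2$ level-$(m+1)$ subcomponents clustering on any of its points, and it is compact as a nested intersection of compact sets.

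The lower bound $h(f^k,V_k(\bar r,x))\ge\log g_-$ follows by picking one point from each of the $\ge g_-^{m-1}$ components of $V_k^m(\bar r,x)$: any two such points are distinguished by some iterate $f^{ik}$ with $0\le i\le m-1$ landing in distinct cells of $K(\bar r)$, and distinct cells are at a uniform distance $\delta_0>0$ apart, giving an $(m,\delta_0)$-separated set. For the upper bound, given $\delta>0$ I would fix $M=M(\delta)$ so that every level-$(M+1)$ component has diameter less than $\delta$, and pick one point from each component of $V_k^{n+M}(\bar r,x)$: for any $y$ in such a component $C$ and any $0\le i\le n-1$, the set $f^{ik}(C)$ is contained in a single level-$(n+M-i)$ component of diameter $<\delta$, so the $\le g_+^{n+M-1}$ chosen points form an $(n,\delta)$-spanning set for $V_k(\bar r,x)$; this yields $h(f^k,V_k(\bar r,x))\le\log g_+$. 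The main obstacle is precisely this last claim: one must verify that the forward $f^{ik}$-image of a level-$(n+M)$ component really is a single level-$(n+M-i)$ component of the cell through $f^{ik}(y)$, which rests on the Markov-type compatibility between the branching construction and the $f^k$-dynamics provided by Corollary \ref{excor}.
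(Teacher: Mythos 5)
Your proposal is correct and follows essentially the same route as the paper: apply Corollary \ref{excor} to sandwich $f^k(K(\bar r,x))$ between two disks, use Lemma \ref{cellcover} to count cells, iterate to build the tree of nested components, and derive the entropy bounds from separated/spanning sets. The only difference is that you spell out in detail the upper bound (spanning-set argument with the fixed offset $M$) and the Cantor-set claim, both of which the paper compresses into ``finally we know'' and ``the other side is analogous''; your elaboration of the ``Markov-type compatibility'' step ($f^{ik}$ of a level-$(n+M)$ component lies in a single level-$(n+M-i)$ component because $f^{ik}(V_k^{n+M})\subset V_k^{n+M-i}$) is the right justification and matches what the paper leaves implicit.
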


\begin{proof}

The idea of the following argument is due to Ma\~n\'e \cite{Ma78}.
Since $K(\bar r,x)=\Phi_{z_x}(D(\bar r))\subset W(x)$, 
by Corollary \ref{excor},
$$f^k(\Phi_{z_x}(D(r_{1},\dots, r_{u})))\supset \Phi_{f^k(z_x)}(D
((\chi_{1}-\ep)^kr_{1}, \dots, (\chi_{u}-\ep)^kr_{u})).$$
By Lemma
\ref{cellcover}, $f^k(K(\bar r,x))$ covers
at least $g_-(k,\bar r, \ep)$ full blocks.
So $f^k(K(\bar r,x))\cap K(\bar r)$ consists
of at least $g_-(k,\bar r, \ep)$ cells. By induction,
$$K(\bar r,x)\cap f^{-k}(K(\bar r))\cap
\dots\cap f^{-mk}(K(\bar r))=V_k^m(\bar r,x)$$ consists of at least
$(g_-(k,\bar r, \ep))^m$ connected components,
each of which is a copy of $f^{-mk}(K(\bar r,x))$. So finally we know
that $V_k(\bar r,x)$ is a Cantor set. For every $m$ and $\delta$ sufficiently
small, every connected component of $V_k^m(\bar r,x)$ intersects $V_k(\bar
r,x)$. From each of such an intersection we can take a point such that these
points form an $(m,\delta)$-separated set, whose cardinality is at least
$(g_-(k,\bar r, \ep))^m$. We have
$$h(f^k, {V_k(\bar r,x)})\ge\log g_-(k,\bar r, \ep).$$

Proof of the other side is analogous.
\end{proof}



Take $\alpha_0>0$ such that $g_-(\alpha_0,\alpha_0,\dots,\alpha_0)>1$. Then

\begin{corollary}\label{nonem}
Let $k$ and $\bar r$ be such that $r_j(\chi_j-\ep)^k\ge\alpha_0, j=1,\dots, u$.
Then
\begin{enumerate}
\item $h(f^k, V_k(\bar r))\ge\log g_-(k,\bar r, \ep)$.
\item $U_k(\bar r)$ is non-empty and $f$-invariant.
\item $h(f^k, U_k(\bar r))\le h(f^k, V_k(\bar r))$.
\end{enumerate}
\end{corollary}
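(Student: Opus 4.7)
The plan is to deduce all three items directly from Proposition \ref{entest} together with monotonicity of topological entropy on subsets, plus a brief compactness argument for the non-emptiness claim in (2). The real work has already been done in Proposition \ref{entest}; what remains is essentially bookkeeping.

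Part (1) follows from the observation that, by definition, $V_k(\bar r, x) \subset V_k(\bar r)$ for every $x \in K(\bar r)$. Monotonicity of topological entropy on subsets, combined with the lower bound in Proposition \ref{entest}, immediately gives $h(f^k, V_k(\bar r)) \ge h(f^k, V_k(\bar r, x)) \ge \log g_-(k,\bar r,\ep)$. Part (3) is analogous: the inclusion $U_k(\bar r) \subset V_k(\bar r)$ holds because $U_k$ requires the $f^k$-orbit to stay in $K(\bar r)$ in both time directions while $V_k$ only imposes the forward condition, and entropy monotonicity finishes it.

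For (2), I would set $U_k^N(\bar r) = \bigcap_{i=-N}^N f^{ik}(K(\bar r))$. Each $U_k^N$ is compact as a finite intersection of compact sets, and they form a decreasing nested family with $U_k(\bar r) = \bigcap_N U_k^N(\bar r)$. Applying $f^{-Nk}$ identifies $U_k^N(\bar r)$ with $V_k^{2N+1}(\bar r)$, which is known to be non-empty (indeed, to contain a Cantor set) by the construction in Proposition \ref{entest}. Hence each $U_k^N(\bar r)$ is non-empty, and the finite intersection property in the compact space $K(\bar r)$ forces $U_k(\bar r)$ to be non-empty as well. Invariance under the dynamics we are studying, namely $f^k(U_k(\bar r)) = U_k(\bar r)$, is immediate by reindexing $\bigcap_i f^{(i+1)k}(K(\bar r)) = \bigcap_i f^{ik}(K(\bar r))$.

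The main obstacle is essentially non-existent; the heavy lifting occurred in Proposition \ref{entest}. The only point that requires a moment's thought is non-emptiness in (2), and the conjugation-by-$f^{-Nk}$ trick reduces it to a property of the forward intersections $V_k^{2N+1}$ that was already established. No new dynamical input is needed at this stage.
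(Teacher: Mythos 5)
Your proof is correct, and since the paper offers no explicit proof of this corollary (it is stated as an immediate consequence of Proposition~\ref{entest}), your argument supplies exactly the bookkeeping the author intends: (1) and (3) follow from set inclusions plus monotonicity of topological entropy, and (2) follows from the finite intersection property in a compact space after conjugating $U_k^N$ to the forward intersection $V_k^{2N+1}$. One small point worth making explicit: the hypothesis $r_j(\chi_j-\ep)^k\ge\alpha_0$ does put you in the range of Proposition~\ref{entest} because the choice of $\alpha_0$ (with $g_-(\alpha_0,\dots,\alpha_0)>1$) forces $\alpha_0>\alpha$. Also, the corollary as printed says ``$f$-invariant,'' but what is true (and what you prove) is $f^k$-invariance; this is a slip in the paper, consistent with the later construction $M=\bigcup_{j=1}^k f^j(U_k(\bar r,\ep))$ which is precisely how one promotes an $f^k$-invariant set to an $f$-invariant one.
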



\begin{proposition}
If for every $j$,
\begin{equation*}\label{restri1}
r_{j}>\frac{\alpha_0}{(\chi_{j}-\ep)^{k}}\cdot
\frac{(\chi_{j}-\ep)^{k}+1}{(\chi_{j}-\ep)^{k}-1}
\end{equation*}
then
$h(f^k, U_k(\bar r))\ge\log g_-(k,\bar r, \ep)$.
\end{proposition}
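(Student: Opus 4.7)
The plan is to upgrade the entropy estimate of Proposition \ref{entest}, which controls only the forward-invariant set $V_k(\bar r,x)$, to the fully $f^k$-invariant set $U_k(\bar r)$ without losing the lower bound $\log g_-(k,\bar r,\ep)$. My strategy is to run the Cantor construction of Proposition \ref{entest} along the unstable leaf of an anchor point whose entire past orbit is already controlled, and then verify that the resulting Cantor set automatically lies inside $U_k(\bar r)$.

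For brevity set $\lambda_j = (\chi_j-\ep)^k$. I would first note that the hypothesis $r_j > \alpha_0(\lambda_j+1)/(\lambda_j(\lambda_j-1))$ is precisely the statement that the interval
\[
\left[\frac{\alpha_0}{\lambda_j},\; r_j\,\frac{\lambda_j-1}{\lambda_j+1}\right]
\]
is nonempty; pick $r'_j$ in it and set $\bar r' = (r'_1,\dots,r'_u)$. Since $r'_j\lambda_j \ge \alpha_0$, Corollary \ref{nonem} supplies an anchor $x_0 \in U_k(\bar r')$. For each $n \ge 0$, let $z_n$ denote the common center of the cell of $K(\bar r)$ and of $K(\bar r')$ on $W(f^{-nk}(x_0))$ that contains $f^{-nk}(x_0)$; this is well-defined because the two have the same leafwise lattice of centers, and $f^{-nk}(x_0)\in K(\bar r')\subset K(\bar r)$. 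In particular $\|f^{-nk}(x_0)-z_n\|_j \le r'_j$ for every $n$. Along $W(x_0)$, Proposition \ref{entest} produces the Cantor set $V_k(\bar r,x_0)$ with $h(f^k, V_k(\bar r,x_0)) \ge \log g_-(k,\bar r,\ep)$.

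The crux is to check $V_k(\bar r,x_0) \subset U_k(\bar r)$. For $y \in V_k(\bar r,x_0) \subset K(\bar r,x_0)$, both $y$ and $x_0$ lie in the common cell centered at $z_0$, so $\|y-x_0\|_j \le r_j + r'_j$ along the leaf. Under $f^{-nk}$ the $j$-th unstable direction contracts by $\lambda_j^{-n}$, giving
\[
\|f^{-nk}(y) - z_n\|_j \;\le\; r'_j + \frac{r_j+r'_j}{\lambda_j^n} \;\le\; r'_j + \frac{r_j+r'_j}{\lambda_j} \;\le\; r_j \qquad (n\ge 1),
\]
the final inequality being exactly $r'_j \le r_j(\lambda_j-1)/(\lambda_j+1)$, the right endpoint of our interval. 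Hence $f^{-nk}(y)$ lies in the cell centered at $z_n$ and therefore in $K(\bar r)$ for every $n \ge 1$; combined with $f^{nk}(y)\in K(\bar r)$ for $n\ge 0$, built into $V_k(\bar r,x_0)$, this puts $y \in U_k(\bar r)$. Monotonicity of entropy on subsets then yields $h(f^k, U_k(\bar r)) \ge h(f^k, V_k(\bar r,x_0)) \ge \log g_-(k,\bar r,\ep)$.

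The main obstacle will be the algebraic balancing of the auxiliary scale $\bar r'$: it must be simultaneously large enough that $U_k(\bar r')$ is nonempty (supplying an anchor with a bounded past orbit) and small enough that the backward contraction of $K(\bar r,x_0)$ stays inside the cells of $K(\bar r)$ at every step. The stated hypothesis on $r_j$ is precisely the statement that these two windows on $r'_j$ overlap, with the factor $\lambda_j+1$ in its numerator tracing back to the combined error terms $r'_j$ and $(r_j+r'_j)/\lambda_j$ that first appear at $n=1$.
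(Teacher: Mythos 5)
Your proof is correct, and the opening moves coincide with the paper's: pick $r_j'$ in exactly the same window $(\alpha_0/\lambda_j,\ r_j(\lambda_j-1)/(\lambda_j+1))$, use Corollary~\ref{nonem} to extract an anchor $x_0\in U_k(\bar r')$, and use the fact that $f^{-nk}(x_0)$ stays $\bar r'$-close to the common lattice of cell centers. Where you diverge is the second half. The paper confines the smaller tube $\Phi_{x_0}(D(\bar\delta))$, $\bar\delta=\bar r-\bar r'$, inside $K(\bar r)$ under \emph{all} backward iterates (this is immediate since $A_\ep^{-mk}(D(\bar\delta))\subset D(\bar\delta)$), concludes $\Phi_{x_0}(D(\bar\delta))\cap V_k(\bar r)\subset U_k(\bar r)$, and then regains the entropy of a full cell by pushing forward once with $f^k$ (the inequality $(r_j-r_j')\lambda_j>r_j+r_j'$ guarantees $f^k(\Phi_{x_0}(D(\bar\delta)))\supset K(\bar r,f^k(x_0))$) and invoking $f^k$-invariance of Bowen entropy, $h(f^k,Y)=h(f^k,f^k(Y))$. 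You instead show directly that the \emph{entire} Cantor set $V_k(\bar r,x_0)$ lies in $U_k(\bar r)$, by observing $\|A_\ep^{-nk}\Phi_{x_0}^{-1}(y)-\gamma_n\|_j\le (r_j+r_j')/\lambda_j^n+r_j'\le r_j$ for all $n\ge1$, which is the worst case at $n=1$ and is precisely the inequality $r_j'\le r_j(\lambda_j-1)/(\lambda_j+1)$. This avoids the push-forward and the entropy-invariance step entirely and lets you cite Proposition~\ref{entest} verbatim; the same algebraic constraint does all the work in both versions, but your containment $V_k(\bar r,x_0)\subset U_k(\bar r)$ is cleaner and arguably what the paper's more circuitous chain of equalities is implicitly proving. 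One small point of hygiene: your interval is stated closed, but to match the strict hypothesis on $r_j$ you should take $r'_j$ in the open interval (it costs nothing and matches the paper).
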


\begin{proof}
Under the assumption, we can find $r_j'$ such that
$$\frac{\alpha_0}{(\chi_{j}-\ep)^k}<r_j'<\frac{(\chi_{j}-\ep)^{k}-1}
{(\chi_{j}-\ep)^{k}+1}\cdot r_{j}$$

Let $\bar r'=(r_1', r_2', \dots, r_u')$ and $\bar\delta=\bar r-\bar r'$.
By Corollary \ref{nonem}, $U_k(\bar r')$ is nonempty.
Take $x_0\in U_k(\bar r')\subset U_k(\bar r)$.
For every $m\in\mathbb{Z}$, $f^{mk}(x_0)\in K(\bar r')$.
We note that $\Phi_{x_0}(D(\bar\delta))\subset K(\bar r,x)$.

For every $m\ge 0$, we have
$$f^{-mk}(\Phi_{x_0}(D(\bar\delta)))\subset
\Phi_{f^{-mk}(x_0)}(D(\bar\delta))\subset K(\bar r,
f^{-mk}(x_0))\subset K(\bar r).$$
So
$\Phi_{x_0}(D(\bar\delta))\subset f^{mk}(K(\bar r))$, and hence
$$\Phi_{x_0}(D(\bar \delta))\cap V_k(\bar r)\subset U_k(\bar r)$$

From the bounds of $r'_j$ we have
$(r_j-r_j')(\chi_{j_0}-\ep)^{k}>r_j+r_j', j=1,2,\dots,u$,
which implies
$$f^k(\Phi_{x_0}(D(\bar\delta)))\supset \Phi_{f^k(x_0)}(D(\bar\delta))\supset
K(\bar r,f^k(x_0)).$$
So finally we obtain
\begin{align*}
h(f^k,U_k(\bar r))\ge& h(f^k, \Phi_{x_0}(D(\bar\delta))\cap V_k(\bar r))\\
=& h(f^k, ( \Phi_{x_0}(D(\bar \delta))\cap K(\bar r,x))\cap f^{-k}(V_k(\bar
r)))\\
=& h(f^k, f^k( \Phi_{x_0}(D(\bar \delta)))\cap V_k(\bar r))\\
\ge&h(f^k, K(\bar r,f^k(x_0))\cap V_k(\bar r))\\
=& h(f^k, V_k(\bar r, f^k(x_0)))\\
\ge& \log g_-(k,\bar r, \ep)
\end{align*}
\end{proof}

\begin{proposition}
$h(f^k, U_k(\bar r))\le \log g_+(k,\bar r, \ep)$.
\end{proposition}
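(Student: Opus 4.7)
The plan is to adapt the upper-bound half of the proof of Proposition \ref{entest} from a single unstable plaque $V_k(\bar r,x)$ to the full set $U_k(\bar r)$. I would first invoke Corollary \ref{nonem}(3), which reduces the problem to showing
$$h(f^k, V_k(\bar r))\le\log g_+(k,\bar r,\ep),$$
and then estimate this global entropy using Bowen--Dinaburg $(m,\delta)$-spanning sets.

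For each fixed unstable plaque $V_k(\bar r,x)$, the component-counting argument already carried out in Proposition \ref{entest} (Corollary \ref{excor} combined with the second half of Lemma \ref{cellcover}) gives that $V_k^m(\bar r,x)$ has at most $(g_+(k,\bar r,\ep))^m$ connected components, each contained in a single cell; this yields an $(m,\delta_0)$-spanning set for $V_k(\bar r,x)$ of cardinality at most $(g_+)^m$, with $\delta_0$ depending only on the cell diameter. To pass from one plaque to the global set, I would fix a transversal $T\subset K(\bar r)$ meeting every unstable plaque of $K(\bar r)$, so that $V_k(\bar r)=\bigcup_{t\in T\cap V_k(\bar r)}V_k(\bar r,t)$. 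In the directions transverse to $W$, i.e.\ the stable and central subspaces of $A$, the block form of $A_\ep$ given in the Basic Facts section shows that $f^k$ acts as a contraction on the stable part and as an isometry-plus-$\ep$-nilpotent on each central Jordan block. Consequently the $m$-th iterate of $f^k$ can spread a $\delta$-transversal ball by at most a polynomial factor in $m$, so the number of transverse $\delta$-balls needed to cover the transverse positions reachable in $m$ steps is at most $\mathrm{poly}(m)$. Multiplying the transverse count by the per-plaque count produces an $(m,\delta)$-spanning set for $V_k(\bar r)$ of cardinality at most $\mathrm{poly}(m)\cdot(g_+(k,\bar r,\ep))^m$, which after taking $\log$, dividing by $m$, and letting $m\to\infty$ gives the claimed bound.

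The hard part will be the central direction. Unlike the Anosov case, the eigenvalues on the unit circle provide no uniform contraction, and Jordan blocks there can create polynomial-in-$m$ growth of transverse distances. What must be verified carefully is that this growth stays subexponential, so that it does not contaminate the $\log g_+$ bound; that rests on the explicit form of $A_\ep$ recorded earlier, in which each unit-modulus Jordan block is an isometry composed with an $\ep$-nilpotent and hence has $m$-th power of operator norm bounded polynomially in $m$ uniformly in small $\ep$.
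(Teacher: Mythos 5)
Your approach is genuinely different from the paper's and, as sketched, it has a gap in the treatment of the transverse direction. The paper's proof goes through measure theory: since $U_k(\bar r)$ is compact and $f^k$-invariant, the variational principle reduces the problem to bounding $h_\mu(f^k)$ for each ergodic $\mu$ supported on $U_k(\bar r)$; then one builds a measurable partition $\xi$ with $\xi(x)=K(\bar r, x)$, which is increasing, subordinate to $W^u$, and generating, and applies the Ledrappier--Young identity $h_\mu(f^k)=H(f^{-k}\xi\mid\xi)$. The conditional entropy is then bounded by counting the (at most $g_+$) non-null atoms of $f^{-k}\xi$ inside each atom of $\xi$, using Lemma \ref{cellcover} exactly as in Proposition \ref{entest}. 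The crucial feature is that Ledrappier--Young makes the transverse (stable plus central) direction invisible: the entropy of $\mu$ is computed entirely from the unstable conditional measures, so the contribution of the transverse direction is identically zero by the formula itself, not by an estimate.

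Your plan attempts a purely topological count via Bowen--Dinaburg spanning sets and a ``transversal $\times$ plaque'' product decomposition, and this is where the difficulty arises. For irreducible $A$ the unstable leaves are dense, so the sub-bundle $E^s\oplus E^c$ is totally irrational and there is no closed transversal aligned with it; a coordinate subtorus such as $\{[y]:\pi_\zeta(y)=0\}$ does hit every plaque exactly once, but it is not tangent to $E^s\oplus E^c$, and under $f^{mk}$ it picks up an exponentially growing $E^u$-component, so the claim that its iterates spread only polynomially is not about the object you actually chose. If instead you try to use a local $E^s\oplus E^c$-disk as the transversal, the leaf density means it returns infinitely often and does not globally separate plaques, so the identity $V_k(\bar r)=\bigcup_{t\in T\cap V_k(\bar r)}V_k(\bar r,t)$ does not give a clean disjoint fibration compatible with the Bowen metric. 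A further point, independent of the transversal issue: the per-plaque count $(g_+)^m$ enumerates connected components of $V_k^m(\bar r, x)$, which are cells of a fixed diameter (comparable to $\mathrm{diam}\,D(\bar r)$, not to $\delta$); to convert this into an $(m,\delta)$-spanning count for arbitrarily small $\delta$ one must use that those components shrink under $f^{-mk}$, and this should be said explicitly since otherwise the estimate only bounds $(m,D)$-spanning sets for a fixed $D$. Finally, a small aside: because $A$ is irreducible its characteristic polynomial is separable, so $A$ has no nontrivial Jordan blocks at all; the ``isometry-plus-$\ep$-nilpotent'' scenario you single out as the hard case does not actually occur here (the central part of $A_\ep$ is an honest isometry up to $\ep$), though addressing the general case does no harm. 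Altogether, the topological route you propose is morally sound, but making the transverse count rigorous in the presence of a minimal unstable foliation without Markov partitions is precisely the difficulty that the paper avoids by switching to conditional measure-theoretic entropy along $W^u$.
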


\begin{proof}
As $U_k(\bar r)$ is compact and $f^k$-invariant, it is enough to show that
for every ergodic measure $\mu$ of $f^k$
supported on $U_k(\bar r)$, $h_\mu(f^k)\le\log g_+(k,\bar r, \ep)$.

Define a measurable partition $\xi$ such that $\xi(x)=K(\bar r,x)$.
Then $\xi$ is subordinate to the foliation $W^u$ with respect to $\mu$. Moreover,
$\xi$ is increasing and $f^{-mk}\xi$ generates as $m\to\infty$.
Let $\mu_x$ be the conditional measure associated with $\xi$.
By Proposition \ref{entest} for every $x\in U_k(\bar r)$
(hence for $\mu$-a.e. $x$), $\xi(x)$ is covered (mod $0$)
by at most $g_+(k,\bar r, \ep)$ non-null elements of
$f^{-k}\xi$, denoted by $\eta_j(x), j=1,2,\dots,q(x)$. We have
$$\int_{\xi(x)}-\log\mu_x(f^{-k}\xi)(x)d\mu_x=\sum_{j=1}^{q(x)}
-\mu_x(\eta_j(x))\log\mu_x(\eta_j(x))\le\log q(x)\le\log g_+(k,\bar r, \ep).$$
Apply the result of Ledrappier and Young \cite{LY85}:
\begin{align*}
h_\mu(f^k)=H(f^{-k}\xi|\xi)
=\int-\log\mu_x(f^{-k}\xi)(x)d\mu
\le\log g_+(k,\bar r, \ep).
\end{align*}
\end{proof}

\section{Density of the Estimates}
We make the following assumptions:
\begin{enumerate}
\item $k$ is large enough such that $(\chi_{j}-\ep)^{k}>2$ for every $j$.
\item $r_j(\chi_j-\ep)^k\ge 3\alpha_0$ for every $j$.
\item $\pi_\zeta(Q_\ep(D(\bar r)))\subset(-\frac12,\frac12)^\zeta$.
\end{enumerate}
We take $\tau>0$ such that
$\pi_\zeta(Q_\ep(D(\tau,\tau,\dots,\tau)))\subset(-\frac12,\frac12)^\zeta$.
For $k$ large enough, let
$$\Omega_k=\Omega_{k,\ep}=\{\bar r\in\mathbb{R}^u| \frac{3\alpha_0}{(\chi_j-\ep)^k}\le r_j\le\tau\}.$$
Then every $\bar r\in\Omega_k$ satisfies the assumptions.

\begin{corollary}\label{uentes}
Under the assumptions, we have
$$\log g_-(k,\bar r, \ep)\le h(f^k, U_k(\bar r, \ep))\le
\log g_+(k,\bar r, \ep)$$
\end{corollary}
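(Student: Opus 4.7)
The plan is to deduce Corollary \ref{uentes} by checking that the three assumptions placed on $k$, $\bar r$, and $\ep$ imply the hypotheses of the two preceding propositions, which already give the two-sided entropy estimate.

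For the upper bound, I would appeal directly to the last proposition ($h(f^k,U_k(\bar r))\le\log g_+(k,\bar r,\ep)$), whose statement has no extra numerical hypothesis beyond what is needed for $K(\bar r,\ep)$, and hence $U_k(\bar r,\ep)$, to be well-defined as a compact $f^k$-invariant subset of the torus. That well-definedness is exactly the content of assumption (3): $\pi_\zeta(Q_\ep(D(\bar r)))\subset(-\tfrac12,\tfrac12)^\zeta$ guarantees that the projected set $U$ sits inside a single fundamental domain and hence $K(\bar r,\ep)$ is the expected compact ``tube'' over $U$.

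For the lower bound, I would reduce to the preceding proposition, which asserts $h(f^k,U_k(\bar r))\ge\log g_-(k,\bar r,\ep)$ provided
\[
r_j>\frac{\alpha_0}{(\chi_j-\ep)^k}\cdot\frac{(\chi_j-\ep)^k+1}{(\chi_j-\ep)^k-1}\qquad(j=1,\dots,u).
\]
I would verify this inequality from assumptions (1) and (2). Writing $x_j=(\chi_j-\ep)^k$, assumption (1) gives $x_j>2$, so $x_j-1>1$ and $(x_j+1)/(x_j-1)<3$ (since at $x=2$ the ratio equals $3$ and it is strictly decreasing for $x>1$). Hence the right-hand side is strictly smaller than $3\alpha_0/(\chi_j-\ep)^k$, and assumption (2), $r_j\ge 3\alpha_0/(\chi_j-\ep)^k$, delivers the required strict inequality. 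Also assumption (2) implies $r_j(\chi_j-\ep)^k\ge 3\alpha_0>\alpha_0$, so the hypothesis of Corollary \ref{nonem} used inside the proof of the lower-bound proposition is available.

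There is no real obstacle here; the only mildly subtle point is that the lower-bound proposition needs a strict inequality, which is why the bound $(x+1)/(x-1)<3$ (strict for $x>2$) rather than $\le 3$ is used. Putting the two bounds together yields $\log g_-(k,\bar r,\ep)\le h(f^k,U_k(\bar r,\ep))\le\log g_+(k,\bar r,\ep)$ for every $\bar r\in\Omega_{k,\ep}$, which is Corollary \ref{uentes}.
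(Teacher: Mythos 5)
Your proposal is correct and is precisely the implicit argument the paper expects the reader to supply (no explicit proof is given for this corollary). The key observation — that assumption (1) forces $(x+1)/(x-1)<3$ for $x=(\chi_j-\ep)^k>2$, which combined with assumption (2)'s $r_j\ge 3\alpha_0/(\chi_j-\ep)^k$ yields the strict inequality needed by the lower-bound proposition — is exactly why the numerical constants $2$ and $3\alpha_0$ appear in the assumptions, and the upper bound indeed requires only that $U_k(\bar r,\ep)$ be well-defined, which assumption (3) guarantees.
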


\begin{lemma}\label{erres}
For every $\gamma>0$, there is $\ep_\gamma>0$ and $k_\gamma$ such that if
$0<\ep\le\ep_\gamma$ and $k\ge k_\gamma$, then for every $\bar r\in\Omega_k$, $\log g_+(k,\bar r, \ep)-\log g_-(k,\bar r, \ep)<\gamma k$.
\end{lemma}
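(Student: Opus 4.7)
The plan is to decompose $\log g_+(k,\bar r,\ep) - \log g_-(k,\bar r,\ep)$ into a dominant term that is linear in $k$ whose slope vanishes with $\ep$, plus a correction bounded uniformly in $k$ and $\bar r \in \Omega_k$. Explicitly, after cancelling the common $C_0$ factor and isolating the $+1$ summand in $g_+$,
\begin{equation*}
\log\frac{g_+(k,\bar r,\ep)}{g_-(k,\bar r,\ep)} = k\, s(\ep) + R(k,\bar r,\ep) + E(k,\bar r,\ep),
\end{equation*}
where $s(\ep) = \sum_{j=1}^u \zeta_j \log\frac{\chi_j+\ep}{\chi_j-\ep}$,
\begin{equation*}
R(k,\bar r,\ep) = \sum_{j=1}^u \zeta_j \log\frac{1 + \alpha/(r_j(\chi_j+\ep)^k)}{1 - \alpha/(r_j(\chi_j-\ep)^k)},
\end{equation*}
and $E = \log\bigl(1 + 1/(C_0 \prod_j (r_j(\chi_j+\ep)^k + \alpha)^{\zeta_j})\bigr)$ captures the effect of the $+1$ in $g_+$.

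For the leading term, I would first pick $\ep_\gamma$ so small that $s(\ep) < \gamma/2$ for every $0 < \ep \le \ep_\gamma$; since $\log\frac{\chi_j+\ep}{\chi_j-\ep}\to 0$ continuously at $\ep = 0$, this is immediate, and it handles $k\, s(\ep)$ uniformly in $k$ and $\bar r$.

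Next I would bound $R$ and $E$ by constants depending only on $\ep$. The key input from $\bar r \in \Omega_k$ is $r_j(\chi_j-\ep)^k \ge 3\alpha_0$, which yields $\alpha/(r_j(\chi_j\pm\ep)^k) \le \alpha/(3\alpha_0) < 1$; the strict inequality $\alpha_0 > \alpha$ is forced by the defining condition $C_0(\alpha_0-\alpha)^\zeta > 1$ of $\alpha_0$. So $R \le C_1(\ep)$ uniformly in $k$ and $\bar r$. Similarly $C_0 \prod_j(r_j(\chi_j+\ep)^k+\alpha)^{\zeta_j} \ge C_0(3\alpha_0)^\zeta$ is bounded below, giving $E \le C_2(\ep)$. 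Choosing $k_\gamma$ so that $(C_1(\ep)+C_2(\ep))/k_\gamma < \gamma/2$ then yields $\log g_+ - \log g_- < \gamma k$ as required.

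The main subtle point will be uniformity in $\ep \in (0, \ep_\gamma]$: one must verify $\sup_{\ep \in (0,\ep_\gamma]}(C_1(\ep)+C_2(\ep)) < \infty$, which in turn requires $\alpha(\ep), \alpha_0(\ep), C_0(\ep)$ to vary controllably. This should follow from continuous dependence of $Q_\ep$ on $\ep$ together with compactness of the parameter range, but it is the one nonroutine check in the argument; everything else is algebraic manipulation of the explicit formulas for $g_\pm$.
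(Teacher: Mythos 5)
Your decomposition is correct and arrives at the same leading term as the paper, namely $s(\ep)=\sum_j\zeta_j\log\frac{\chi_j+\ep}{\chi_j-\ep}$, but the route is somewhat different in execution. The paper first uses monotonicity of $\psi_k(\bar r,\ep)=\log g_+-\log g_-$ in each $r_j$ to replace $\bar r$ by the endpoint $r_j=3\alpha_0/(\chi_j-\ep)^k$, and then passes to an iterated limit, $k\to\infty$ followed by $\ep\to0$. Your approach instead peels off the exact linear term $k\,s(\ep)$ algebraically and bounds the residuals $R$ and $E$ directly for all $\bar r\in\Omega_k$, which avoids the monotonicity step and makes the linear-in-$k$ structure explicit.

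The one place you hedge --- the uniformity of $C_1(\ep)+C_2(\ep)$ over $\ep\in(0,\ep_\gamma]$ --- is in fact not an issue, and you do not need continuity of $Q_\ep$ or any compactness argument. From $\bar r\in\Omega_k$ you have $r_j(\chi_j\pm\ep)^k\ge r_j(\chi_j-\ep)^k\ge3\alpha_0$, and from the defining property $C_0(\alpha_0-\alpha)^{\zeta}=g_-(\alpha_0,\dots,\alpha_0)>1$ you get $\alpha<\alpha_0$, hence $\alpha/(r_j(\chi_j\pm\ep)^k)\le\alpha/(3\alpha_0)<1/3$. This gives
\[
R(k,\bar r,\ep)\le\sum_{j=1}^u\zeta_j\log\frac{1+1/3}{1-1/3}=\zeta\log2,
\]
an absolute constant. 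Likewise $C_0\prod_j(r_j(\chi_j+\ep)^k+\alpha)^{\zeta_j}\ge C_0(3\alpha_0)^\zeta>C_0(\alpha_0-\alpha)^\zeta>1$, so $E\le\log2$. Thus $C_1(\ep)+C_2(\ep)\le(\zeta+1)\log2$ independently of $\ep$, and your choice of $k_\gamma$ can be made once and for all. With this filled in, your argument is complete and, if anything, gives a cleaner handle on the uniformity in $\ep$ than the paper's iterated-limit phrasing, which requires a little extra care to convert into the stated uniform statement.
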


\begin{proof}
Note that
\begin{align*}
\psi_k(\bar r, \ep)=\log g_+(k,\bar r, \ep)-\log g_-(k,\bar r, \ep)=\sum_{j=1}^u\log\frac{(r_j(\chi_j+\ep)^k+\alpha)^{\zeta_j}}{(r_j(\chi_j-\ep)^k-\alpha)^{\zeta_j}}
\end{align*}
is a continuous function of $\bar r\in\Omega_k$ and is decreasing for
each $r_j$. So
\begin{align*}
\frac1k\psi_k(\bar r, \ep)&\le\frac1k\psi_k(\frac{3\alpha_0}{(x_1-\ep)^k},\dots,\frac{3\alpha_0}{(x_u-\ep)^k}):=\psi_k(\ep)\\
&=\frac1k(\log(C_0\cdot\prod_{j=1}^u(\frac{3\alpha_0(\chi_j+\ep)^k}{(\chi_j-\ep)^k}+\alpha)^{\zeta_j}+1)-\log
C_0-\sum_{j=1}^u\log(3\alpha_0-\alpha)^{\zeta_j})\\
&\to\sum_{j=1}^u\zeta_j\log\frac{\chi_j+\ep}{\chi_j-\ep} \text{ as }k\to\infty\\
&\to 0 \text{ as }\ep\to 0.
\end{align*}
The result follows.

\end{proof}

Proposition \ref{promain}, hence our main theorem, is a trivial corollary
of the following one, by taking 
$$M=\bigcup_{j=1}^k f^j(U_k(\bar r,\ep))$$
for $\beta_1, \beta_2, k, \bar r,\ep$ as in the proposition.

\begin{proposition}\label{fineest}
For every $\beta_1, \beta_2\in[0, h(f)]$, $\beta_1<\beta_2$, there is $\ep_0$
and $k_0$
such that for $0<\ep\le\ep_0$ and $k\ge k_0$, there is $\bar r\in\Omega_{k,\ep}$ such that $h(f^k, U_k(\bar
r,\ep))\in[\log g_-(k,\bar r, \ep), \log g_+(k,\bar r, \ep)]\subset[k\beta_1,k\beta_2]$.
\end{proposition}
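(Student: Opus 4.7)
The plan is to use the continuity of $\log g_-(k, \cdot, \ep)$ on the box $\Omega_{k,\ep}$ together with Lemma~\ref{erres} to locate $\bar r$ so that both $\log g_-$ and $\log g_+$ land in $[k\beta_1, k\beta_2]$, and then to read off the conclusion from Corollary~\ref{uentes}.

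First I would set $\gamma = (\beta_2 - \beta_1)/2 > 0$ and invoke Lemma~\ref{erres} to obtain $\ep_\gamma$ and $k_\gamma$ such that $\log g_+(k, \bar r, \ep) < \log g_-(k, \bar r, \ep) + \gamma k$ uniformly on $\Omega_{k,\ep}$ whenever $\ep \le \ep_\gamma$ and $k \ge k_\gamma$. Next I would note that $\bar r \mapsto \log g_-(k, \bar r, \ep)$ is continuous and strictly increasing in each coordinate on the connected box $\Omega_{k,\ep}$, so its image is an interval $[a_k(\ep), b_k(\ep)]$ attained at the corresponding corners. The lower corner $r_j = 3\alpha_0/(\chi_j-\ep)^k$ gives
\[a_k(\ep) = \log\bigl(C_0(3\alpha_0 - \alpha)^\zeta\bigr),\]
which does not depend on $k$, while the upper corner $r_j = \tau$ gives
\[b_k(\ep) = \log C_0 + \sum_{j=1}^u \zeta_j \log\bigl(\tau(\chi_j-\ep)^k - \alpha\bigr),\]
whose quotient $b_k(\ep)/k$ tends to $\sum_j \zeta_j \log(\chi_j - \ep)$ as $k \to \infty$, a limit that in turn approaches $h(f) = \sum_j \zeta_j \log \chi_j$ as $\ep \to 0$.

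Since $\beta_1 < \beta_2 \le h(f)$, I would then shrink to $\ep_0 \in (0, \ep_\gamma]$ so that $\sum_j \zeta_j \log(\chi_j - \ep_0) > \beta_1$, and enlarge $k_\gamma$ to a $k_0$ ensuring that for every $k \ge k_0$ and $\ep \le \ep_0$ both $b_k(\ep) \ge k\beta_1$ and $a_k(\ep) + \gamma k \le k\beta_2$ hold. Given such $k$ and $\ep$, the argument splits into two cases. If $k\beta_1 \ge a_k(\ep)$, the intermediate value theorem yields $\bar r \in \Omega_{k,\ep}$ with $\log g_-(k,\bar r,\ep) = k\beta_1$, so $\log g_+(k,\bar r, \ep) < k\beta_1 + \gamma k = k(\beta_1+\beta_2)/2 < k\beta_2$. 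Otherwise I take $\bar r$ at the lower corner, where $\log g_-(k,\bar r, \ep) = a_k(\ep) > k\beta_1$ and $\log g_+(k, \bar r, \ep) < a_k(\ep) + \gamma k \le k\beta_2$. In both cases $[\log g_-(k,\bar r,\ep), \log g_+(k,\bar r,\ep)] \subset [k\beta_1, k\beta_2]$, and Corollary~\ref{uentes} sandwiches $h(f^k, U_k(\bar r, \ep))$ inside this interval.

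The main obstacle will be the quantitative balancing in the preceding paragraph: $b_k(\ep)$ must beat $k\beta_1$ even for $\beta_1$ close to $h(f)$, which in the non-hyperbolic case (some $\chi_j$ close to $1$) forces $\ep$ to be chosen very small first, and one also has to verify that the $\ep$-dependent constants $C_0(\ep)$, $\alpha(\ep)$, and $\tau(\ep)$ are tame enough for a single $k_0$ to serve all $\ep \le \ep_0$. Once those choices are ordered correctly (fix $\ep$ first, then let $k$ dominate), what remains is a one-variable intermediate value argument combined with the gap estimate from Lemma~\ref{erres}.
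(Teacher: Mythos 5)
Your proposal follows the paper's proof essentially verbatim: compute the two corner values of $\log g_-$ over $\Omega_{k,\ep}$, take the limits $k\to\infty$ and $\ep\to 0$, combine with Lemma~\ref{erres} to control the gap $\log g_+ - \log g_-$, locate $\bar r$ by the intermediate value theorem, and finish with Corollary~\ref{uentes}. Your case split on whether $k\beta_1\ge a_k(\ep)$ is in fact a small improvement over the paper's wording, which asserts $\frac1k\log g_-(k,\Omega_{k,\ep},\ep)\supset[\beta_1,\beta_2]$ and then solves $\log g_- = k\beta_1$ exactly --- that equation has no solution when $\beta_1=0$, since $a_k(\ep)=\log\bigl(C_0\prod_{j}(3\alpha_0-\alpha)^{\zeta_j}\bigr)>0$ by the choice of $\alpha_0$, whereas your lower-corner fallback handles that endpoint cleanly while still landing $[\log g_-,\log g_+]$ inside $[k\beta_1,k\beta_2]$.
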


\begin{proof}
$g_-(k,\bar r)$ is continuous in $\bar r$ and increasing for each $r_j$.
We have
\begin{align*}
\frac1k\min\{\log g_-(k,\bar r,\ep)|\bar r\in\Omega_{k,\ep}\}&=\frac1k\log g_-(k,(\frac{3\alpha_0}{(x_1-\ep)^k},\dots,\frac{3\alpha_0}{(x_u-\ep)^k}),\ep)\\
&=\frac1k(\log C_0(\ep)+\sum_{j=1}^u\log(3\alpha_0-\alpha)^{\zeta_j})\\
&\to 0 \text{ as }k\to\infty,\\
\frac1k\max\{\log g_-(k,\bar r,\ep)|\bar r\in\Omega_{k,\ep}\}&=\frac1k\log g_-(k,(\tau,\cdots,\tau),\ep)\\
&=\frac1k(\log C_0(\ep)+\sum_{j=1}^u\log(\tau(\chi_j-\ep)^k-\alpha)^{\zeta_j})\\
&\to\sum_{j=1}^u\zeta_j\log(\chi_j-\ep)\text{ as }k\to\infty\\
&\to h(f)\text{ as }\ep\to 0.
\end{align*}
From Proposition \ref{erres}, there is $\ep_0>0$ and $k_0$ such that for
$0<\ep<\ep_0$ and $k\ge k_0$, $\psi_k(\ep)<k(\beta_2-\beta_1)$ and $\frac1k\log
g_-(k,\Omega_{k,\ep},\ep)\supset[\beta_1, \beta_2]$. Hence there is $\bar r\in\Omega_{k,\ep}$
such that $\log g_-(k,\bar r,\ep)=k\beta_1$. By Corollary \ref{uentes},
$h(f^k, U_k(\bar r, \ep))\in[\log g_-(k,\bar r, \ep), \log g_+(k,\bar r, \ep)]\subset[k\beta_1,k\beta_2]$. 
\end{proof}

\section*{Acknowledgments}
We would like to thank Anatole Katok for numerous discussions and explanations.
We also thank Douglas Lind and Klaus Schmidt for their knowledge on the topic.



\end{document}